\documentclass[12pt]{amsart}
\usepackage{amsmath, amsthm, amsfonts, amscd, amssymb, eucal, latexsym, mathrsfs, appendix, parskip, tikz}
\usepackage[numbers,sort&compress]{natbib}
\usepackage{color,hyperref}
\hypersetup{colorlinks,breaklinks,
            linkcolor=red,urlcolor=red,
            anchorcolor=red,citecolor=blue}

\theoremstyle{definition}
\setlength{\textwidth}{15cm}
\setlength{\oddsidemargin}{4mm}
\setlength{\evensidemargin}{4mm}

\newtheorem{theorem}{Theorem}[section]
\newtheorem{corollary}[theorem]{Corollary}

\newtheorem{proposition}[theorem]{Proposition}

\newtheorem{definition}[theorem]{Definition}
\newtheorem{remark}[theorem]{Remark}

\allowdisplaybreaks

\begin{document}
\title{A generalized Fej\'er's theorem for locally compact groups}
\author{Huichi Huang}
\address{Huichi Huang, College of Mathematics and Statistics, Chongqing University, Chongqing, 401331, PR China}
\email{huanghuichi@cqu.edu.cn}
\keywords{Fourier series, Fej\'er's theorem, approximate identity, pointwise convergence}
\subjclass[2010]{Primary: 43A07; Secondary: 42B05, 40A05}
\date{May 28, 2016}
\begin{abstract}
The classical Fej\'er's theorem is a criterion for pointwise convergence of Fourier series on the unit circle. We  generalize it  to locally compact groups.
\end{abstract}

\maketitle


\section{Introduction}

Let $\mathbb{T}$ be the unit circle. When necessary, identify $\mathbb{T}$ with $[0,1)$ or $\mathbb{R}/\mathbb{Z}$. Denote $e^{2\pi inx}$ by $e(nx)$ for a real number $x$ and an integer $n$. For an $f$ in $L^1(\mathbb{T})$, its Fourier coefficients $\hat{f}(n)$ for $n\in\mathbb{Z}$ is given by
$\hat{f}(n)=\int_\mathbb{T} f(x)e(-nx)\,dx$ and define $S_N(f)(x)$ as $\sum_{n=-N}^N \hat{f}(n)e(nx)$ for every nonnegative integer $N$. The  Fourier series of $f$ is
$\sum_{n\in\mathbb{Z}} \hat{f}(n)e(nx)$.

In  classical Fourier analysis, an important question is pointwise convergence of $S_N(f)$ for  $f$ in $L^1(\mathbb{T})$.  By Carleson-Hunt theorem~\cite[Thm. (c)]{Carleson1966}~\cite[Thm. 1]{Hunt1968},  the Fourier series $S_N(f)(x)$ of an $f$ in $L^p(\mathbb{T})$ with $1<p<\infty$ converges to $f(x)$ almost everywhere. But in general the pointwise convergence does not hold even when $f$ is continuous. Cf.~\cite[Chap. II, Sec. 2]{Katznelson2004}.

If the Fourier series is replaced by its average, called the Ces\`aro mean or the Fej\'er mean,
 $$\sigma_N(f,x)=\frac{1}{N+1}[S_0(f)(x)+\cdots+S_N(f)(x)]=K_N*f(x),$$ then one have a much better convergence. Here $K_N(x)$ is the Fej\'er's kernel~(see section 2 for the definition).

In 1900, L. Fej\'er came up with an explicit criteria which tells us when and to which value $\sigma_N(f)$ converges. Cf.~\cite{Fejer1900}\cite[Chap. I, Thm. 3.1(a)]{Katznelson2004}\cite[Thm. 3.4.1]{Grafakos2014}.~\footnote{For the history of Fej\'er's theorem, see~\cite{Kahane2006}, wherein some  applications and continuations of Fej\'er's theorem are also mentioned.}


\begin{theorem}~[Fej\'er's theorem]\
~\label{Fejer1}\

For an $f$ in $L^1(\mathbb{T})$,  if both the left and the right limit of $f(x)$  exist at some $x_0$ in $\mathbb{T}$~(denoted by $f(x_0+)$ and $f(x_0-)$ respectively),   then  $$\lim_{N\to\infty}K_N*f(x_0)=\frac{1}{2}[f(x_0+)+f(x_0-)].$$
\end{theorem}

In particular, when $f$ is continuous $\sigma_N(f, x)$ converges to $f(x)$ for every $x$ in $\mathbb{T}$.

Note that the left and right limits of $f$ at $x_0$ can be interpreted in terms of the finite partition $\{(0,\frac{1}{2}),[\frac{1}{2},1)\}$ of $\mathbb{T}=[0,1)$~(up to measure 0 since $(0,\frac{1}{2})\cup[\frac{1}{2},1)=(0,1)=[0,1)\setminus\{0\}$):~\footnote {Note that $y\to 0$ when $y\in [\frac{1}{2},1)$ makes sense since we identify 0 with 1.}
$$f(x_0-)=\displaystyle\lim_{\substack{y\to 0 \\ y\in (0,\frac{1}{2})}}f(x_0-y),  \quad  f(x_0+)=\displaystyle\lim_{\substack{y\to 0 \\ y\in [\frac{1}{2},1)}}f(x_0-y).$$

Moreover for the approximate identity~(cf. Definition~\ref{def:AI}) $\{K_n\}_{n=1}^\infty$ of $L^1(\mathbb{T})$,  one have $$\int_{[0,\frac{1}{2})}K_n(t)\,dt=\int_{[\frac{1}{2},1)}K_n(t)\,dt=\frac{1}{2}$$ for all $n\geq 0$.

This observation motivates the following generalization of Fej\'er's theorem to locally compact groups.

Let $G$ be a locally compact group with the unit $e_G$ and a fixed left Haar measure $\mu$.

A finite collection $\{A_1,A_2,\cdots, A_k\}$ of Borel subsets  of $G$ is called a {\bf local partition}~(at $e_G$) if the following are true:
\begin{enumerate}
\item $A_i\cap A_j=\emptyset$ for $1\leq i\neq j\leq k$,
\item $\displaystyle\mu(G\setminus \bigcup_{i=1}^k A_i)=0$,
\item each $A_j\cap \mathcal{N}\neq\emptyset$ for any neighborhood $\mathcal{N}$ of $e_G$.
\end{enumerate}


\begin{theorem}~\label{Thm: GFejer}
[A generalized  Fej\'er's theorem]\

Consider  a locally compact group $G$ with a fixed left Haar measure $\mu$. Let $\{F_\theta\}_{\theta\in\Theta}$ be an approximate identity of $L^1(G)$.  Assume that there exists  a local partition $\{A_1,A_2,\cdots, A_k\}$  of $G$ such that  $\displaystyle\lim_{\theta}\int_{A_j}F_\theta(y)\,d\mu(y)=\lambda_j$ for every $1\leq j\leq k$.

For an $f$  in $L^\infty(G)$, if there exists  $x$ in $G$ such that $\displaystyle\lim_{\substack{y\to e_G \\ y\in A_j}} f(y^{-1}x)$~(denoted by $f(x,A_j)$) exists for every $1\leq j\leq k$, then
\begin{equation*}~\label{eqfejer2}
\lim_{\theta}F_\theta*f(x)=\sum_{j=1}^k \lambda_j f(x,A_j).
\end{equation*}
Moreover if $\displaystyle\lim_{\theta}\sup_{y\in\mathcal{N}^c}|F_\theta(y)|=0$ for any neighborhood $\mathcal{N}$ of $e_G$,  then  for every $f$  in $L^1(G)$~(or $L^\infty(G)$) such that each $f(x,A_j)$ exists for some $x$ in $G$, we have
\begin{equation*}
\lim_{\theta}F_\theta*f(x)=\sum_{j=1}^k \lambda_j f(x,A_j).
\end{equation*}
\end{theorem}

The paper is organized as follows.

In section 2, after some preliminaries, we prove Theorem~\ref{Thm: GFejer} and its  variant Corollary~\ref{cor: GFejer}. To give some applications, various special cases~(either abelian or non-abelian groups) of Theorem~\ref{Thm: GFejer} are discussed in section 3.

\section*{Acknowledgements}
Most  of the paper was finished when I was a postdoctoral supported by ERC Advanced Grant No. 267079. I would like to express my gratitude to my mentor Joachim Cuntz. Part of the paper is carried out during a visit to Besan\c{c}on, I thank Quanhua Xu for his invitation and hospitality, and Simeng Wang for his help. I  thank Loukas Grafakos for answering my question as reading his excellent GTM book: {\it Classical Fourier Analysis}. At last I  thank Stefan Cobzas and Ferenc Weisz for helpful discussions and comments.

\section{The main theorem}

Within this article $G$ stands for a locally compact group with a fixed left Haar measure $\mu$. Let $e_G$ be the identity of $G$. Denote by $L^1(G)$ the space of integrable functions~(with respect to $\mu$) on $G$ and by $L^\infty(G)$ the space of essentially bounded  functions~(with respect to $\mu$) on $G$.

The {\bf convolution} $f*g$ for $f$ and $g$ in $L^1(G)$ is given by
$$f*g(x)=\int_G f(y)g(y^{-1}x)\, d\mu(y)$$ for every $x\in G$.


\begin{definition}~[Approximate identity]~\label{def:AI}~\cite[Defn. 1.2.15.]{Grafakos2014}\

An {\bf approximate identity} is a family of functions $\{F_\theta\}_{\theta\in\Theta}$ in $L^1(G)$ such that
\begin{enumerate}
  \item $\|F_\theta\|_{L^1(G)}\leq C$ for all $\theta$.
  \item $\int_G F_\theta(x)\,d\mu(x)=1$ for all $\theta$.
  \item $\displaystyle\lim_{\theta}\int_{\mathcal{N}^c} |F_\theta(x)|\,d\mu(x)=0$ for any neighborhood $\mathcal{N}$ of $e_G$.
\end{enumerate}

\end{definition}

There always exists an approximate identity in $L^1(G)$~\cite[Chap.2, Prop. 2.42]{Folland1995}.

Now we are ready to prove the main theorem.

\begin{proof}[Proof of Theorem~\ref{Thm: GFejer}]\

Suppose $f$ is in $L^\infty(G)$ such that each $f(x,A_j)$ exists for some $x$ in $G$.

For an arbitrary $\varepsilon>0$,  there exists a neighborhood $\mathcal{N}$ of $e_G$ such that $|f(y^{-1}x)-f(x,A_j)|<\varepsilon$  for every $1\leq j\leq k$ whenever $y$ is in $\mathcal{N}\cap A_j$.

Then
\begin{align*}
&F_\theta*f(x)-\sum_{j=1}^k \lambda_j f(x,A_j)=\int_G F_\theta(y)f(y^{-1}x)\,d\mu(y)-\sum_{j=1}^k \lambda_j f(x,A_j)\\
&=(\int_{\mathcal{N}}+\int_{\mathcal{N}^c})F_\theta(y)f(y^{-1}x)\,d\mu(y)-\sum_{j=1}^k \lambda_j f(x,A_j).
\end{align*}

First
\begin{align*}
&\displaystyle\int_{\mathcal{N}} F_\theta(y)f(y^{-1}x)\,d\mu(y)-\sum_{j=1}^k \lambda_j f(x,A_j)  \\
&=\sum_{j=1}^k \int_{\mathcal{N}\cap A_j} F_\theta(y)f(y^{-1}x)\,d\mu(y)-\sum_{j=1}^k \lambda_j f(x,A_j) \\
&=\sum_{j=1}^k \int_{\mathcal{N}\cap A_j} F_\theta(y)(f(y^{-1}x)-f(x,A_j))\,d\mu(y)+\sum_{j=1}^k(\int_{\mathcal{N}\cap A_j} F_\theta(y)\,d\mu(y)-\lambda_j) f(x,A_j).
\end{align*}

So we have
\begin{align*}
&\limsup_{\theta}|\int_{\mathcal{N}} F_\theta(y)f(y^{-1}x)\,d\mu(y)-\sum_{j=1}^k \lambda_j f(x,A_j)|  \\
&\leq\sum_{j=1}^k\limsup_{\theta} \int_{\mathcal{N}\cap A_j} |F_\theta(y)||f(y^{-1}x)-f(x,A_j)|\,d\mu(y)   \\
&+\sum_{j=1}^k\limsup_{\theta}|\int_{\mathcal{N}\cap A_j} F_\theta(y)\,d\mu(y)-\lambda_j|\,| f(x,A_j)| \\
&\leq \limsup_{\theta}\sum_{j=1}^k \varepsilon\int_{\mathcal{N}\cap A_j}|F_\theta(y)|\,d\mu(y)+\sum_{j=1}^k\limsup_{\theta}|\int_{\mathcal{N}\cap A_j} F_\theta(y)\,d\mu(y)-\lambda_j|\,| f(x,A_j)|.
\end{align*}

Note that for every $\theta$
 $$\sum_{j=1}^k \int_{\mathcal{N}\cap A_j}|F_\theta(y)|\,d\mu(y)=\int_{\mathcal{N}} |F_\theta(y)|\,d\mu(y)\leq \|F_\theta\|_{L^1(G)}\leq C,$$

and it follows from
$\displaystyle\lim_{\theta}\int_{\mathcal{N}^c} |F_\theta(y)|\,d\mu(y)=0$ that
$$\lim_{\theta}\int_{\mathcal{N}\cap A_j} F_\theta(y)\,d\mu(y)=\lim_{\theta}\int_{A_j} F_\theta(y)\,d\mu(y)=\lambda_j$$ for every $1\leq j\leq k$.

Therefore
$$\limsup_{\theta}|\int_{\mathcal{N}} F_\theta(y)f(y^{-1}x)\,d\mu(y)-\sum_{j=1}^k \lambda_j f(x,A_j)|\leq C\varepsilon.$$

Moreover
$$\limsup_{\theta}|\int_{\mathcal{N}^c} F_\theta(y)f(y^{-1}x)\,d\mu(y)|\leq [\limsup_{\theta}\int_{\mathcal{N}^c} |F_\theta(x)|\,d\mu(x)] \|f\|_{L^\infty(G)}=0.$$

Hence $$\limsup_{\theta}|F_\theta*f(x)-\sum_{j=1}^k \lambda_j f(x,A_j)|\leq C\varepsilon$$ for any $\varepsilon>0$. This proves the first part of the theorem.

Now assume that $\displaystyle\lim_{\theta}\sup_{y\in\mathcal{N}^c}|F_\theta(y)|=0$ for any neighborhood $\mathcal{N}$ of $e_G$ and   $f$  is in $L^1(G)$ such that  each $f(x,A_j)$ exists for some $x$ in $G$.

As before, we have
$$\limsup_{\theta}|\int_{\mathcal{N}} F_\theta(y)f(y^{-1}x)\,d\mu(y)-\sum_{j=1}^k \lambda_j f(x,A_j)|\leq C\varepsilon$$ for every $\varepsilon>0$.

Moreover
$$\limsup_{\theta}|\int_{\mathcal{N}^c} F_\theta(y)f(y^{-1}x)\,d\mu(y)|\leq [\lim_{\theta}\sup_{y\in\mathcal{N}^c}|F_\theta(y)|]\|f\|_{L^1(G)}=0.$$
 This completes the proof.
\end{proof}

Although Theorem~\ref{Thm: GFejer} requires that a local partition $\{A_1,\cdots, A_k\}$ satisfies that every $\displaystyle\lim_{\theta}\int_{A_j}F_\theta(y)\,d\mu(y)$ exists,  this assumption could be easily satisfied when one considers subnets of $\{F_\theta\}$. Note that
$$|\int_{A_j}F_\theta(y)\,d\mu(y)|\leq \int_{A_j}|F_\theta(y)|\,d\mu(y)\leq \|F_\theta\|_{L^1(G)}\leq C$$ for all $\theta$ and $j$, so for any given  approximate identity $\{F_\theta\}_{\theta\in\Theta}$ of $L^1(G)$ and  local partition $\{A_1,\cdots, A_k\}$ of $G$, there always exists a subnet $\Theta_1$ of $\Theta$ such that every $\displaystyle\lim_{\theta\in\Theta_1}\int_{A_j}F_\theta(y)\,d\mu(y)$~(denoted by $\lambda_j(\Theta_1)$) exists.

The argument goes as follows.

Consider the net of bounded complex numbers $\{\int_{A_1}F_\theta(y)\,d\mu(y)\}_{\theta\in \Theta}$. There is a subnet $\Theta'$ of $\Theta$ such that $\displaystyle\lim_{\theta\in\Theta'}\int_{A_1}F_\theta(y)\,d\mu(y)$ exists and equals some $\lambda_1$.  Then consider the net of bounded complex numbers $\{\int_{A_2}F_\theta(y)\,d\mu(y)\}_{\theta\in \Theta'}$, as before, there exists a subnet of $\Theta''$ of $\Theta'$ such that $\displaystyle\lim_{\theta\in\Theta''}\int_{A_2}F_\theta(y)\,d\mu(y)$ exists and equals some $\lambda_2$~(also note that $\displaystyle\lim_{\theta\in\Theta''}\int_{A_1}F_\theta(y)\,d\mu(y)=\lambda_1$). Repeat this procedure. After finite steps, we can find a subnet $\Theta_1$ of $\Theta$ such that every $\displaystyle\lim_{\theta\in\Theta_1}\int_{A_j}F_\theta(y)\,d\mu(y)$ exists.

So we have  the following  variant of Theorem~\ref{Thm: GFejer}.


\begin{corollary}~\label{cor: GFejer}
Given any approximate identity $\{F_\theta\}_{\theta\in\Theta}$ of $L^1(G)$ and local partition $\{A_1,\cdots, A_k\}$ of $G$, if for an $f$ in $L^\infty(G)$, every $f(x, A_j)$ exists, then there exists a subnet $\Theta_1$ of $\Theta$ such that every $\displaystyle\lim_{\theta\in\Theta_1}\int_{A_j}F_\theta(y)\,d\mu(y)$ exists~(denoted by $\lambda_j(\Theta_1)$)  and
$$\displaystyle\lim_{\theta\in\Theta_1} F_\theta*f(x)=\sum_{j=1}^k \lambda_j(\Theta_1)f(x,A_j).$$
Moreover if $\displaystyle\lim_{\theta\in\Theta_1}\sup_{y\in\mathcal{N}^c}|F_\theta(y)|=0$ for every neighborhood $\mathcal{N}$ of $e_G$, then for every $f$ in $L^1(G)$~(or $L^\infty(G)$) such that every $f(x,A_j)$ exists for some $x$ in $G$, we have
$$\displaystyle\lim_{\theta\in\Theta_1} F_\theta*f(x)=\sum_{j=1}^k \lambda_j(\Theta_1)f(x,A_j).$$
\end{corollary}


\section{Some special cases}


In this section we consider  some concrete examples of Theorem~\ref{Thm: GFejer} including both abelian and non-abelian groups.

A local partition of a locally compact group $G$ looks as follows:
\begin{center}
\begin{tikzpicture}
\draw[thick] (2.5,2) node [below right]{$e_G$}--(1.5,3.5);
\draw (2.6,3.0) node {$\mathcal{N}$};
\draw [thick](2.5,2)--(4,5);
\draw [thick](2.5,2)--(6,3);
\draw [thick](2.5,2)--(0,0);
\draw (5,5) node {$G$};
\draw (2.5, 2) circle [radius=0.8];
\draw[very thick] (0,0) to [out=90,in=-135] (1.5,3.5);
\draw[very thick] (1.5, 3.5) to [out=45, in=145] (4,5);
\draw[very thick] (4, 5) to [out=-35, in=120] (6,3);
\draw[very thick] (6,3) to [out=-60, in=-90] (0,0);
\end{tikzpicture}
\end{center}

\subsection{d-torus}\

The {\bf Fej\'er kernel} for $\mathbb{T}$ is given by
$$K_n(t)=\displaystyle\sum_{j=-n}^n (1-\frac{|j|}{n+1})e(jt)=\frac{\sin^2{(n+1)\pi t}}{(n+1)\sin^2 \pi t}$$
 for every nonnegative integer $n$.

The Fej\'er kernel has many nice properties. Below we list some of them which would be frequently used throughout the paper. See~\cite[p.181, Prop. 3.1.10]{Grafakos2014} and~\cite[p.205, (3.4.3)]{Grafakos2014} for a proof.


\begin{proposition}~[Properties of $K_n$]~\label{pcfejer}
\begin{enumerate}
\item $K_n(t)\geq 0$ for every $t$ in $\mathbb{T}$ and $n\geq 0$.
\item $\int_{\mathbb{T}} K_n(t)\, dt=1$ for all $n\geq 0$.
\item For $\lambda\in(0,\frac{1}{2})$, we have $\displaystyle\sup_{t\in [\lambda,1-\lambda]} {K_n(t)}\leq\frac{1}{(n+1)\sin^2(\pi\lambda)},$   hence $\displaystyle\lim_{n\to \infty}\int_{\lambda}^{1-\lambda} K_n(t)\,dt=0.$
\end{enumerate}
\end{proposition}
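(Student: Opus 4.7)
The plan is to work from the two equivalent expressions for $K_n$ provided in the definition of the Fej\'er kernel: the trigonometric polynomial $K_n(t)=\sum_{j=-n}^n (1-|j|/(n+1))e(jt)$ and the closed form $K_n(t)=\sin^2((n+1)\pi t)/((n+1)\sin^2(\pi t))$. Each of the three properties is most cleanly obtained from one of these two forms.

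For (1), nonnegativity is immediate from the closed form, since the numerator $\sin^2((n+1)\pi t)$ and the denominator $(n+1)\sin^2(\pi t)$ are both nonnegative. The only caveat is the removable singularity at $t=0$, where both vanish; there I would evaluate directly from the sum form, getting $K_n(0)=\sum_{j=-n}^n(1-|j|/(n+1))=n+1\geq 0$, which is consistent with the limit of the closed form. For (2), the Fourier-series form is ideal: by orthogonality on $\mathbb{T}$, $\int_{\mathbb{T}}e(jt)\,dt=\delta_{j,0}$, so term-by-term integration of the finite sum picks out only $j=0$, whose coefficient is $1-0/(n+1)=1$.

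For (3), the core observation is to bound the denominator of the closed form from below on $[\lambda,1-\lambda]$. Since $t\mapsto\sin(\pi t)$ is concave on $[0,1]$ and symmetric about $t=1/2$, its minimum on $[\lambda,1-\lambda]$ is attained at the endpoints, giving $\sin^2(\pi t)\geq\sin^2(\pi\lambda)$. Combined with the trivial bound $\sin^2((n+1)\pi t)\leq 1$, this yields the pointwise inequality $K_n(t)\leq 1/((n+1)\sin^2(\pi\lambda))$ on $[\lambda,1-\lambda]$. Integrating this bound over an interval of length at most $1$ and letting $n\to\infty$ gives the decay $\int_\lambda^{1-\lambda}K_n(t)\,dt\to 0$. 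There is no substantive obstacle in this proposition; the only mild care needed is reconciling the apparent $0/0$ of the closed form at $t=0$ with the value read off from the sum form.
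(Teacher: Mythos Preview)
Your proof is correct and complete. The paper itself does not supply a proof of this proposition at all; it merely refers the reader to Grafakos, \emph{Classical Fourier Analysis}, Prop.~3.1.10 and (3.4.3). Your argument fills in exactly the standard details one would find there: nonnegativity and the uniform bound on $[\lambda,1-\lambda]$ from the closed form $\sin^2((n+1)\pi t)/((n+1)\sin^2\pi t)$, and $\int_{\mathbb{T}}K_n=1$ from the orthogonality relations applied to the finite trigonometric sum. The handling of the removable singularity at $t=0$ is a nice touch of care, though strictly speaking not needed for the statements as written.
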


So $\{K_n(t)\}_{n=1}^\infty$ is an approximate identity of  $L^1(\mathbb{T})$. In addition, $\displaystyle\lim_{n\to\infty}\sup_{t\in\mathcal{N}^c} |K_n(t)|=0$ for every neighborhood $\mathcal{N}$ of 0.

In the unit circle, there is some ''strange-looking'' local partition. For instance, choose two sequences $\{a_n\}_{n=1}^\infty$ and $\{b_n\}_{n=1}^\infty$ such that
\begin{itemize}
\item $0<a_{n+1}<b_{n+1}<a_n<b_n<\cdots<a_1<b_1=\frac{1}{2}$;
\item $\displaystyle \lim_{n\to\infty}a_n=\lim_{n\to\infty} b_n=0$.
\end{itemize}
Let $A_1=(\frac{1}{2},1)$, $\displaystyle A_2=\bigcup_{n=1}^\infty (a_n, b_n)$ and $\displaystyle A_3=\bigcup_{n=1}^\infty (b_{n+1}, a_n)$. Then $\{A_1, A_2, A_3\}$ is a local partition which is completely different from the local partition $\{(0,\frac{1}{2}), (\frac{1}{2},1)\}$ used in the classical Fej\'er's theorem.

So it is worthy of mentioning the following generalized Fej\'er's theorem for the unit circle.
\begin{corollary}
Given a local partition $\{A_1, A_2, \cdots, A_k\}$ of $\mathbb{T}$. Assume that $\displaystyle\lim_{n\to\infty} \int_{A_j} K_n(x)\,dx=\lambda_j$ for every $1\leq j\leq k$. For $f$ in $L^1(\mathbb{T})$, if each $f(x_0, A_j)$ exists at some $x_0$ in $\mathbb{T}$, then
$$\displaystyle\lim_{n\to\infty} K_n*f(x_0)=\sum_{j=1}^k \lambda_j f(x_0, A_j).$$
\end{corollary}

Now consider d-torus for $d\geq 2$.

We identify $\mathbb{T}^d$ with $[0,1)^d$ or $\mathbb{R}^d/\mathbb{Z}^d$ when necessary.  The inner product  $x\cdot y$ of $x=(x_1,\cdots,x_d)$ and $y=(y_1,\cdots,y_d)$ in $\mathbb{R}^d$ is given by $x_1y_1+\cdots+x_dy_d$.

The {\bf square Fej\'er  kernel} for $\mathbb{T}^d$ is defined by
$$K_n^d(x_1,\cdots,x_d)=\prod_{j=1}^d K_n(x_j)=\sum_{m\in\mathbb{Z}^d, |m_j|\leq n}(1-\frac{|m_1|}{n+1})\cdots (1-\frac{|m_d|}{n+1})e(m\cdot x)$$ for every nonnegative integer $n$.


\begin{proposition}~[Properties of $K_n^d$]~\label{pfejer}
\begin{enumerate}
\item $K_n^d(x)\geq 0$ for every $x$ in $\mathbb{T}^d$ and $n\geq 0$.
\item $\int_{\mathbb{T}^d} K_n^d(x)\, dx=1$ for all $n\geq 0$.
\end{enumerate}
\end{proposition}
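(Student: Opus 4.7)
The plan is to reduce both claims directly to the corresponding one–variable facts for $K_n$ recorded in Proposition~\ref{pcfejer}, using only the product structure in the definition
$$K_n^d(x_1,\ldots,x_d)=\prod_{j=1}^d K_n(x_j).$$

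For part (1), I would simply observe that each factor $K_n(x_j)$ is nonnegative by Proposition~\ref{pcfejer}(1), so the product $K_n^d(x)$ is a product of nonnegative real numbers and therefore nonnegative. No obstacle here; it is a one-line consequence of the definition.

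For part (2), since $K_n$ is a continuous, nonnegative, integrable function on $\mathbb{T}$, the product $K_n^d$ is a nonnegative measurable function on $\mathbb{T}^d$, so Tonelli's theorem applies and I can iterate the integral:
$$\int_{\mathbb{T}^d} K_n^d(x)\,dx=\int_{\mathbb{T}}\!\!\cdots\!\int_{\mathbb{T}}\prod_{j=1}^d K_n(x_j)\,dx_1\cdots dx_d=\prod_{j=1}^d\int_{\mathbb{T}}K_n(x_j)\,dx_j.$$
By Proposition~\ref{pcfejer}(2), each one-dimensional integral equals $1$, so the product equals $1^d=1$.

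There is no real obstacle in this proposition; it is a direct product/Fubini argument bootstrapped from the one-dimensional case, included only to fix notation and record the properties that will later be invoked when exhibiting $\{K_n^d\}$ as an approximate identity on $\mathbb{T}^d$. The only point worth being careful about is to invoke Tonelli rather than Fubini so that integrability is a conclusion rather than a hypothesis; nonnegativity from part (1) makes this automatic.
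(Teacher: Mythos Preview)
Your proof is correct and follows exactly the paper's approach: both reduce (1) and (2) directly to the one-variable facts in Proposition~\ref{pcfejer} via the product structure, with the integral in (2) splitting as $\int_{\mathbb{T}^d} K_n^d = \prod_{j=1}^d \int_{\mathbb{T}} K_n(x_j)\,dx_j = 1$. Your explicit mention of Tonelli is a mild refinement over the paper's terse ``$\int K_n^d = \prod \int K_n = 1$,'' but the substance is identical.
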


\begin{proof}
All these properties of $K_n^d$ are induced by properties of $K_n$. See~\cite[p.181, Prop. 3.1.10 \& p.205, (3.4.3)]{Grafakos2014}.
\begin{enumerate}
\item $K_n^d\geq 0$ since $K_n\geq 0$.
\item $\int_{\mathbb{T}^d} K_n^d(x)\,dx=\prod_{j=1}^d\int_\mathbb{T} K_n(x_j)\,dx_j=1$.
\end{enumerate}
\end{proof}

Also $\{K_n^d\}_{n=1}^\infty$ is an approximate identity of $L^1(\mathbb{T}^d)$.

For $k=(k_1,\cdots,k_d)$ in $\{0,1\}^d$, define
$$I_k=\displaystyle\prod_{j=1}^d I_{k_j}$$ with $I_0=[0,\frac{1}{2})$ and $I_1=[\frac{1}{2},1)$. Note that $\int_0^{\frac{1}{2}}K_n(t)\,dt=\int_{\frac{1}{2}}^1 K_n(t)\,dt=\frac{1}{2}$ for all $n\geq 0$. So $\{I_k\}_{k\in\{0,1\}^d}$ is a local partition of $\mathbb{T}^d$~($=[0,1)^d$) such that
$$\int_{I_k}K_n^d(x)\,dx=\displaystyle\prod_{j=1}^d \int_{I_{k_j}} K_n(x_j)\,dx_j=\frac{1}{2^d}$$ for all $k\in\{0,1\}^d$ and $n\geq 0$.

For $f$ in $L^1(\mathbb{T}^d)$, define
$$f(x,I_k)=\displaystyle\lim_{\substack{y\to 0 \\ y\in I_k}} f(x-y)$$  for every $k\in\{0,1\}^d$.


\begin{corollary}~\label{cor:td}
Let $d\geq 2$. For  $f$ in $L^\infty(\mathbb{T}^d)$ and $x$ in $\mathbb{T}^d$, if each $f(x,I_k)$ exists, then
$$\displaystyle\lim_{n\to\infty}K_n^d*f(x)=\frac{1}{2^d}\displaystyle\sum_{k\in\{0,1\}^d} f(x,I_k).$$
\end{corollary}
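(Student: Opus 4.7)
The plan is to invoke Theorem~\ref{thm: Gfejer} directly with $G = \mathbb{T}^d$ (written additively, so that $y^{-1}x = x - y$), the approximate identity $\{K_n^d\}_{n \geq 0}$, and the partition $\{I_k\}_{k \in \{0,1\}^d}$. To do this, I need to verify three hypotheses: (i) $\{K_n^d\}$ is an approximate identity of $L^1(\mathbb{T}^d)$; (ii) $\{I_k\}_{k \in \{0,1\}^d}$ is a local partition of $\mathbb{T}^d$ at $0$; and (iii) $\lim_n \int_{I_k} K_n^d \, dx$ exists and equals $1/2^d$ for every $k$.

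Hypothesis (i) was already established in Section~2. For (ii), the sets $I_k$ are pairwise disjoint with union $[0,1)^d = \mathbb{T}^d$, so only the locality at $0$ requires comment. Given a neighborhood $\mathcal{N}$ of $0 \in \mathbb{T}^d$ and an index $k \in \{0,1\}^d$, I would construct a point of $I_k \cap \mathcal{N}$ by choosing coordinates $\epsilon_j$ close to $0$ in the $j$-th slot when $k_j = 0$, and close to $1$ (hence close to $0$ in $\mathbb{T}$, via the identification $0 \sim 1$) when $k_j = 1$. Hypothesis (iii) is immediate from the product structure $K_n^d(x) = \prod_{j=1}^d K_n(x_j)$ together with the identities $\int_0^{1/2} K_n = \int_{1/2}^1 K_n = 1/2$ recalled in the paragraph preceding the corollary; Fubini gives $\int_{I_k} K_n^d \, dx = \prod_j \int_{I_{k_j}} K_n = (1/2)^d$ for every $n$ and every $k$, so the limit trivially exists and equals $\lambda_k = 1/2^d$.

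With these three items checked, the first part of Theorem~\ref{thm: Gfejer} applied to $f \in L^\infty(\mathbb{T}^d)$ (for which each $f(x, I_k)$ is assumed to exist) yields
\[
\lim_{n \to \infty} K_n^d * f(x) = \sum_{k \in \{0,1\}^d} \lambda_k\, f(x, I_k) = \frac{1}{2^d} \sum_{k \in \{0,1\}^d} f(x, I_k),
\]
noting that the notation $f(x, I_k)$ defined in the corollary coincides with the one in Theorem~\ref{thm: Gfejer} under the additive convention on $\mathbb{T}^d$.

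There is essentially no obstacle here; the whole corollary is a direct specialization of the main theorem. The one point that deserves a moment's care is the locality verification for those $I_k$ with some $k_j = 1$, which genuinely uses the torus identification $0 \sim 1$ and is the reason the partition into half-intervals forms a local partition of $\mathbb{T}^d$ (rather than merely a partition of $[0,1)^d$ as a subset of $\mathbb{R}^d$).
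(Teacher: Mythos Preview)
Your proposal is correct and follows exactly the same approach as the paper: a direct application of the first part of Theorem~\ref{thm: Gfejer} with $G=\mathbb{T}^d$, the approximate identity $\{K_n^d\}_{n\geq 0}$, and the local partition $\{I_k\}_{k\in\{0,1\}^d}$. The paper's proof is in fact a single sentence invoking that theorem, relying on the verifications of hypotheses (i)--(iii) already carried out in Section~2 and in the paragraph immediately preceding the corollary; your write-up simply spells these out in slightly more detail.
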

\begin{proof}
Apply the first part of Theorem~\ref{Thm: GFejer} to  the approximate identity $\{K_n^d\}_{n=0}^\infty$ of $L^1(\mathbb{T}^d)$ and the local partition $\{I_k\}_{k\in\{0,1\}^d}$ of $\mathbb{T}^d$.
\end{proof}

\subsection{Euclidean spaces}\

The {\bf Wigner semicircle kernel} $W_\theta(t)$ is given by
\[ W_\theta= \left\{
  \begin{array}{l l}
    \frac{2}{\pi\theta^2}\sqrt{\theta^2-t^2} & \quad \text{when $-\theta\leq t\leq\theta,$}  \\
    0 & \quad \text{otherwise}
  \end{array} \right.\]
  for all $t\in\mathbb{R}$ and $\theta>0$.

For a $\lambda$ in $(0,1)$, by shifting the graph of $W_\theta(t)$ along the $t$-axis, we get a new function $W_{\theta,\lambda}(t)$ satisfying
$$\int_{(-\infty,0)}W_{\theta,\lambda}(t)\,dt=\lambda.$$ Also note that every $W_{\theta,\lambda}(t)$ is compactly supported in a closed interval  shrinking to $\{0\}$ as $\theta$ goes to 0.

Define $W_{\theta,\lambda}^d(x)=\displaystyle\prod_{j=1}^d W_{\theta,\lambda}(x_j)$ for any positive integer $d$, then $\{W_{\theta,\lambda}^d(x)\}_{\theta>0}$ is an approximate identity of $L^1(\mathbb{R}^d)$ and satisfies that $\displaystyle\lim_{\theta\to 0}\sup_{x\in\mathcal{N}^c} |W_{\theta,\lambda}^d(x)|=0$ for every neighborhood
 $\mathcal{N}$ of 0 in $\mathbb{R}^d$.

\begin{corollary}~\label{cor: wk}
Fix a $\lambda$ in $(0,1)$. For an $f$ in  $L^1(\mathbb{R}^d)$ or $L^\infty(\mathbb{R}^d)$, if every $f(x,J_k)$ exists for some $x\in\mathbb{R}^d$, then
$$\displaystyle\lim_{\theta\to 0}W_{\theta,\lambda}^d*f(x)=\displaystyle\sum_{k\in\{0,1\}^d} \prod_{j=1}^d \lambda^{1-k_j}(1-\lambda)^{k_j}f(x,J_k).$$
\end{corollary}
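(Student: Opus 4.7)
The plan is to apply the second part of Theorem~\ref{thm: Gfejer} directly, taking the approximate identity to be $\{W_{\theta,\lambda}^d\}_{\theta>0}$ and the local partition to be $\{J_k\}_{k\in\{0,1\}^d}$. The preliminary section already verifies that $\{W_{\theta,\lambda}^d\}$ is an approximate identity of $L^1(\mathbb{R}^d)$ and that $\lim_{\theta\to 0}\sup_{y\in\mathcal{N}^c}|W_{\theta,\lambda}^d(y)|=0$ for every neighborhood $\mathcal{N}$ of $0$ in $\mathbb{R}^d$. Since $\{J_k\}$ was already observed in Corollary~\ref{cor:rd} to be a local partition of $\mathbb{R}^d$, both the $L^\infty$ conclusion (from the first assertion of the theorem) and the $L^1$ conclusion (from the second, which uses the supremum-vanishing hypothesis) become available simultaneously.

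The only computation specific to this corollary is identifying the constants $\lambda_j$ attached to each block $J_k$. By Fubini and the product structure of $W_{\theta,\lambda}^d$,
$$\int_{J_k} W_{\theta,\lambda}^d(x)\,d x=\prod_{j=1}^d \int_{J_{k_j}} W_{\theta,\lambda}(t)\,d t.$$
By the defining property of $W_{\theta,\lambda}$ one has $\int_{J_0}W_{\theta,\lambda}(t)\,dt=\int_{(-\infty,0)}W_{\theta,\lambda}(t)\,dt=\lambda$, and since $W_{\theta,\lambda}$ is a probability density on $\mathbb{R}$ this forces $\int_{J_1}W_{\theta,\lambda}(t)\,dt=1-\lambda$. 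These two values are captured uniformly by $\lambda^{1-k_j}(1-\lambda)^{k_j}$, so
$$\int_{J_k} W_{\theta,\lambda}^d(x)\,d x=\prod_{j=1}^d \lambda^{1-k_j}(1-\lambda)^{k_j}$$
independently of $\theta$; in particular the $\theta\to 0$ limit exists and equals the same product.

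Inserting these values as the $\lambda_j$'s in Theorem~\ref{thm: Gfejer} yields the stated formula. There is no genuine obstacle here: the corollary is a tensorization of the one-dimensional shifted-semicircle Fej\'er statement, and the only care point is the bookkeeping convention $J_0=(-\infty,0)$, $J_1=[0,\infty)$ versus the exponents $1-k_j$ and $k_j$ appearing in the weights.
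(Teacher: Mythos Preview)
Your proposal is correct and follows essentially the same approach as the paper: apply Theorem~\ref{thm: Gfejer} to the approximate identity $\{W_{\theta,\lambda}^d\}_{\theta>0}$ and the local partition $\{J_k\}_{k\in\{0,1\}^d}$, invoke the supremum-vanishing property from the preliminaries to cover both the $L^1$ and $L^\infty$ cases, and compute $\int_{J_k}W_{\theta,\lambda}^d=\prod_{j=1}^d\lambda^{1-k_j}(1-\lambda)^{k_j}$ via the product structure. Your write-up merely spells out the one-dimensional integrals a bit more explicitly than the paper does.
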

\begin{proof}
Apply Theorem~\ref{Thm: Gfejer} to Consider the local partition $\{J_k\}_{k\in\{0,1\}^d}$ of $\mathbb{R}^d$ and the approximate identity $\{W_{\theta,\lambda}^d\}_{\theta>0}$ of $L^1(\mathbb{R}^d)$. Note that $\displaystyle\lim_{\theta\to 0}\sup_{x\in\mathcal{N}^c} |W_{\theta,\lambda}^d(x)|=0$ for every neighborhood
 $\mathcal{N}$ of 0 in $\mathbb{R}^d$. Furthermore
 $$\int_{J_k}W_{\theta,\lambda}^d(x)\,dx=\displaystyle\prod_{l=1}^d \int_{J_{k_l}} W_{\theta,\lambda}(x_l)\,dx_l=\prod_{j=1}^d \lambda^{1-k_j}(1-\lambda)^{k_j}.$$
Applying Theorem~\ref{Thm: GFejer} finishes the proof.
\end{proof}


\begin{remark}

In~\cite{FeichtingerWeisz2006}, H. G. Feichtinger and F. Weisz prove some theorems about convergence for convolutions of some special types of approximate identities with $f$ in $L^1(\mathbb{R}^d)$ or $L^1(\mathbb{T}^d)$ at a Lebesgue point of $f$. Cf.~\cite[Thm. 4.6 \& Thm. 7.2]{FeichtingerWeisz2006}). Corollaries~\ref{cor:td} and ~\ref{cor: wk} have some overlaps with, but are not covered by those in~\cite{FeichtingerWeisz2006} since the points satisfying the assumptions of Corollaries~\ref{cor:td} and ~\ref{cor: wk}  are  not necessarily Lebesgue points.

A {\bf Lebesgue point} of an $f$ in $L^1(\mathbb{R}^d)$ is a point $x$ in $\mathbb{R}^d$ such that
$$\displaystyle\lim_{r\to 0}\frac{1}{m(B_r(x))}\int_{B_r(x)} |f(y)-f(x)|dm(y)=0.$$

Here $m$ is the Lebesgue measure of $\mathbb{R}^d$ and $B_r(x)$ is the open ball centered at $x$ with radius $r>0$~\cite[7.6]{Rudin1987}.

A point $x\in\mathbb{R}^d$ so that every  $f(x,J_k)$ exists is not necessarily a Lebesgue point.

For instance, consider $f={\bf 1}_{(0,1)}$, the characteristic function of $(0,1)$, which is in $L^1(\mathbb{R})$. Then $f(0-)=0$ and $f(0+)=1$.
 However 0 is not a Lebesgue point~\footnote{We can change the value of  $f(0)$, but this does not affect the fact that 0 is not a Lebesgue point.} since

$$\displaystyle\lim_{r\to 0}\frac{1}{2r}\int_{(-r,r)} |f(y)-f(0)|dy=\lim_{r\to 0}\frac{1}{2r}\int_{(0,r)} |1-0|dy=\frac{1}{2}.$$
\end{remark}

\subsection{The Heisenberg group}\

The continuous {\bf Heisenberg group} $\mathbb{H}$ is the group of 3 by 3 upper triangular real matrices with diagonal entries 1, that is,
$$\mathbb{H}=\{\bigl(\begin{smallmatrix}
1&a&b \\ 0&1&c\\ 0&0&1
\end{smallmatrix} \bigr)| a,b,c\in\mathbb{R}\}.$$
As a topological space $\mathbb{H}$ is homeomorphic to $\mathbb{R}^3$ and a left Haar measure of $\mathbb{H}$ is $da\,db\,dc$.

Define $$W_\theta^3(a,b,c)=W_\theta(a)W_\theta(b)W_\theta(c)$$ for every $\theta>0$ and $a,b,c$ in $\mathbb{R}$. Then it is easy to see that $\{W_\theta^3\}_{\theta>0}$ is an approximate identity of $L^1(\mathbb{H})$ and $$\displaystyle\lim_{\theta\to 0}\sup_{(a,b,c)\in\mathcal{N}^c} |W_{\theta}^3(a,b,c)|=0$$ for every neighborhood
$\mathcal{N}$ of $e_\mathbb{H}=\bigl(\begin{smallmatrix}
1&0&0 \\ 0&1&0\\ 0&0&1
\end{smallmatrix} \bigr)$ in $\mathbb{H}$.

For $k=(k_1,k_2,k_3)$ in $\{0,1\}^3$, define
$J_k=\displaystyle\prod_{l=1}^3 J_{k_l}$ with $J_0=(-\infty,0)$ and $J_1=[0,\infty)$.  Then $\{J_k\}_{k\in\{0,1\}^3}$ is a local partition of $\mathbb{H}$ at $e_\mathbb{H}$ such that $\int_{J_k} W_\theta^3(a,b,c) dadbdc=\frac{1}{8}$ for every $k\in\{0,1\}^3$.

We get the following.


\begin{corollary}~\label{cor:H}
For an $f$ in  $L^1(\mathbb{H})$ or $L^\infty(\mathbb{H})$, if every $f(x,J_k)$ exists for some $x$ in $\mathbb{H}$, then
$$\displaystyle\lim_{\theta\to 0}W_\theta^3*f(x)=\displaystyle\frac{1}{8}\sum_{k\in\{0,1\}^3} f(x,J_k).$$
\end{corollary}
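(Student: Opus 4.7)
The plan is to apply Theorem~\ref{thm: Gfejer} directly to the approximate identity $\{W_\theta^3\}_{\theta>0}$ of $L^1(\mathbb{H})$ and the local partition $\{J_k\}_{k\in\{0,1\}^3}$. The preamble to the corollary has already pointed to the ingredients; my task reduces to verifying the remaining hypotheses of Theorem~\ref{thm: Gfejer} and reading off the limits $\lambda_k$.

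First I would confirm that $\{J_k\}_{k\in\{0,1\}^3}$ is indeed a local partition at $e_\mathbb{H}$: under the homeomorphism $\mathbb{H}\cong\mathbb{R}^3$ sending $e_\mathbb{H}$ to $(0,0,0)$, the eight sets $J_k$ are the closed octants (of mixed open/closed type along each coordinate hyperplane), and each of them clearly meets every neighborhood of the origin in $\mathbb{R}^3$, hence every neighborhood of $e_\mathbb{H}$ in $\mathbb{H}$. Pairwise disjointness and total coverage are immediate from the definition $J_0=(-\infty,0)$, $J_1=[0,\infty)$.

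Next I would compute
$$\lambda_k:=\lim_{\theta\to 0}\int_{J_k} W_\theta^3(a,b,c)\,da\,db\,dc.$$
Because $W_\theta^3(a,b,c)=W_\theta(a)W_\theta(b)W_\theta(c)$ and the Haar measure $da\,db\,dc$ is a product measure, Fubini factors this into a product of three one-dimensional integrals of $W_\theta$, each taken over either $(-\infty,0)$ or $[0,\infty)$. Since $W_\theta$ is even, compactly supported, and has total mass $1$, each factor equals $1/2$; thus $\lambda_k=1/8$ for every $k\in\{0,1\}^3$, and in fact this is independent of $\theta$, so there is no passage to the limit needed.

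Finally I would invoke the decay property $\lim_{\theta\to 0}\sup_{(a,b,c)\in\mathcal{N}^c}|W_\theta^3(a,b,c)|=0$ for every neighborhood $\mathcal{N}$ of $e_\mathbb{H}$, recorded in Section~2, which is precisely the extra condition in the second half of Theorem~\ref{thm: Gfejer} that lets the conclusion hold for $f\in L^1(\mathbb{H})$ as well as $f\in L^\infty(\mathbb{H})$. Plugging $\lambda_k=1/8$ into the conclusion of Theorem~\ref{thm: Gfejer} yields
$$\lim_{\theta\to 0}W_\theta^3*f(x)=\sum_{k\in\{0,1\}^3}\lambda_k\, f(x,J_k)=\frac{1}{8}\sum_{k\in\{0,1\}^3} f(x,J_k),$$
as required. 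There is no serious obstacle: the only thing worth flagging is that the non-abelian multiplication on $\mathbb{H}$ does not interfere with any of the above because both $W_\theta^3$ and the Haar measure factor as products in the ambient $(a,b,c)$-coordinates, so the coordinate-sign partition interacts with Fubini in exactly the same way as in the Euclidean case.
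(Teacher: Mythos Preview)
Your proposal is correct and follows exactly the paper's approach: apply Theorem~\ref{thm: Gfejer} to the local partition $\{J_k\}_{k\in\{0,1\}^3}$ of $\mathbb{H}$ with the approximate identity $\{W_\theta^3\}_{\theta>0}$. The paper's proof is a single sentence to this effect; you have simply spelled out the hypothesis checks (local partition, $\lambda_k=1/8$, and the sup-decay condition) that the paper recorded in the paragraph preceding the corollary.
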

\begin{proof}
Applying Theorem~\ref{Thm: GFejer} to the local partition $\{J_k\}_{k\in\{0,1\}^3}$  of $\mathbb{H}$ gives the proof.
\end{proof}

\end{document}